\newcounter{theoremct}[section]
\let\c@equation\c@theoremct  \makeatother
\newtheorem{theorem}[theoremct]{Theorem}
\newtheorem{corollary}[theoremct]{Corollary}
\newtheorem{lemma}[theoremct]{Lemma}
\theoremstyle{remark}
\newtheorem{remark}[theoremct]{Remark}
\theoremstyle{definition}
\newtheorem{definition}[theoremct]{Definition}
\newtheorem*{compcrit}{Composition Condition}
\newtheorem*{invertcrit}{Invertibility Condition}
\newtheorem*{acknowledgment}{Acknowledgement}
\def\L{\relax{L}}
\def\Q{\relax{Q}}
\def\qi{\textup{i}}
\def\qj{\textup{j}}
\def\qk{\textup{k}}
\def\K{\relax{K}}
\def\strut{\vrule width 0pt height 10pt depth 6pt}
\def\Op{\mathfrak{D}}
\def\Ops{\textup{alg}(\Ring,\Op)}
\def\Opsi#1{\textup{alg}_{#1}(\Ring,\Op)}
\def\Ring{\textbf{A}}
\def\rr{a}
\def\Field{\textbf{k}}
\def\End{\textup{End}_{\Field}}
\def\N{\mathbb{N}}
\def\P{\relax{P}}
\def\W{\Phi}
\def\Winv{\Phi^{-1}}
\def\hatOp{\hat\Op}
\begin{document}

\title{On Factoring an Operator Using Elements of its Kernel}

\author{Alex Kasman}

\begin{abstract}
A well-known theorem factors a scalar coefficient differential operator given a linearly independent set of functions in its kernel.  The goal of this paper is to  generalize this useful result to other types of operators.  In place of the derivation $\partial$ acting on some ring of functions, this paper considers the more general situation of an endomorphism $\Op$ acting on a unital associative algebra.  The operators considered, analogous to differential operators, are those which can be written as a finite sum of powers of $\Op$ followed by left multiplication by elements of the algebra.  Assume that the set of such operators is closed under multiplication and that a Wronski-like matrix produced from some finite list of elements of the algebra is invertible (analogous to the linear independence condition).  Then, it is shown that the set of operators whose kernels contain all of those elements is the left ideal generated by an explicitly given operator.  In other words, an operator has those elements in its kernel if and only if it has that generator as a right factor.  Three examples demonstrate the application of this result in different contexts, including one in which $\Op$ is an automorphism of finite order.
\end{abstract}
\maketitle

\section{Introduction}

If $L$ is an ordinary differential operator in the variable $x$ and $f_1(x),\ldots,f_k(x)$ are linearly independent functions in its kernel then $L=Q\circ K$ where $Q$ is a differential operator and $K$ is the differential operator of order $k$ whose action on an arbitrary function $y(x)$ is given by the formula
$$
K(y)=\frac{\hbox{Wr}(f_1,\ldots,f_k,y)}{\hbox{Wr}(f_1,\ldots,f_k)}
$$
with ``$\hbox{Wr}$'' denoting the Wronskian determinant \cite{Zettl}.  This fact of differential algebra has found frequent application, for example, in solving soliton equations by the use of Darboux transformations \cite{Matveev}.

The goal of this note is to extend this useful result regarding factorization to a more general situation.  An ordinary differential operator is a polynomial in the operator $\partial=\frac{d}{dx}$ with coefficients in some ring of differentiable functions on which the operator acts.  To generalize the result, consider the situation of an endomorphism $\Op$ on a unital associative algebra $\Ring$. The main result (Theorem~\ref{mainresult}) is an analogous factorization given elements of the kernel of an operator which can be written as a sum of non-negative integer powers of $\Op$ followed by left multiplication by elements of $\Ring$.  This result requires only mild assumptions on $\Op$ and the given elements of its kernel.  In particular, it is assumed only that the left $\Ring$-module generated by powers of $\Op$ is closed under composition and that a Wronski-like matrix made out of the elements of the kernel is invertible.

Section~\ref{sec:examples} illustrates the use of this result in three situations different from the usual case of differential operators with scalar coefficients.  The first involves differential operators but with non-commuting coefficients, the second considers the case of difference operators acting on discrete scalar functions, and the final example considers an automorphism of order four acting on  the integers with a fifth root of unity adjoined.

\section{Preliminaries}

%\subsection{$\Ring$ and $\Field$}

Let $\Ring$ be an associative unital algebra over the unital commutative ring $\Field$.   The additive and multiplicative identities of $\Ring$ will be denoted by $0$ and $1$ respectively.

Despite the non-commutativity of $\Ring$, we will consider matrices with elements in $\Ring$ and the usual matrix product for which the $(i,j)$ entry of the product of an $m\times n$ matrix $\Omega$ with an $n\times q$ matrix $\Gamma$ is
$$
(\Omega\cdot \Gamma)_{ij}=\sum_{l=1}^n\Omega_{il}\Gamma_{lj}.
$$
We say $\Gamma=\Omega^{-1}$ is the inverse of $\Omega$ if $(\Omega\cdot\Gamma)_{ij}=(\Gamma\cdot\Omega)_{ij}=\delta_{ij}$  where $\delta_{ij}$ is $1\in\Ring$ if $i=j$ and $0\in\Ring$ otherwise.

%\subsection{operators of $\Ring$}

 For the remainder of the paper, consider a fixed but unspecified endomorphism $\Op\in\End(\Ring)$.   Additionally, we will consider any element $x\in\Ring$ to be an element of $\End(\Ring)$ by identifying it with the endomorphism $L_x:y\mapsto xy$ that left multiplies by $x$.  Since $x\mapsto L_x$ is injective, we will similarly identify $\Ring$ with its isomorphic image in $\End(\Ring)$ under this injection.  Then, together $\Ring\subset\End(\Ring)$ and $\Op\in\End(\Ring)$ generate a subring of operators on $\Ring$ which we will call $\Ops$ where multiplication denoted with the symbol ``$\circ$'' is given by composition.  Note that multiplication in $\Ops$ is then necessarily associative, but probably not commutative.

We will henceforth assume that the endomorphism $\Op$ has the following additional property:

\begin{compcrit}
 For each $f\in\Ring$, the operator $\Op\circ f\in\Ops$  can be written in the form 
$$
\Op\circ f = p_f\circ\Op+q_f
$$
for some $p_f,q_f\in\Ring$. 
\end{compcrit}
%Section~\ref{sec:examples} below will illustrate that many common operators satisfy this condition.

 It follows from this assumption that 
  the set $\{\Op^i\ :\ i\in\N\cup\{0\}\}$ spans $\Ops$ as a left $\Ring$-module:
% every element of $\Ops$ can be written in the form of a polynomial  in $\Op$ with coefficients from $\Ring$:

\begin{theorem}\label{thm:allpoly}   For every $\L\in\Ops$ there exist coefficients $\rr_i\in\Ring$ and a number $m\in\N\cup\{0\}$ so that\footnote{For convenience, let $\Op^0$ denote $1\in\Ring$.} $\displaystyle\L=\sum_{i=0}^m \rr_i\circ\Op^i.$
\end{theorem}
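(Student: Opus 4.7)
The plan is to show that the set
$$
M = \Bigl\{ \sum_{i=0}^m \rr_i\circ \Op^i : m\in\N\cup\{0\},\ \rr_i\in\Ring\Bigr\}
$$
coincides with $\Ops$. Since $M$ obviously sits inside $\Ops$, and contains both $\Ring$ (take $m=0$) and $\Op$ (take $m=1$, $\rr_1=1$, $\rr_0=0$), it is enough to show that $M$ is closed under addition and composition; then $M$ contains the subring generated by $\Ring$ and $\Op$, which is all of $\Ops$. Closure under addition is immediate by padding with zero coefficients.

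For closure under composition, expanding
$$
\Bigl(\sum_i \rr_i\circ\Op^i\Bigr)\circ\Bigl(\sum_j b_j\circ\Op^j\Bigr)= \sum_{i,j} \rr_i\circ(\Op^i\circ b_j)\circ\Op^j,
$$
reduces everything to showing that for each $f\in\Ring$ and each $n\in\N\cup\{0\}$, the operator $\Op^n\circ f$ lies in $M$. This I would prove by induction on $n$. For $n=0$ the claim is trivial, and for $n=1$ it is exactly the Composition Condition, which gives $\Op\circ f=p_f\circ\Op+q_f\in M$.

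For the inductive step, assume $\Op^n\circ f=\sum_{i=0}^n c_i\circ \Op^i$ with $c_i\in\Ring$. Then
$$
\Op^{n+1}\circ f=\Op\circ \sum_{i=0}^n c_i\circ\Op^i=\sum_{i=0}^n (\Op\circ c_i)\circ\Op^i=\sum_{i=0}^n (p_{c_i}\circ\Op+q_{c_i})\circ\Op^i,
$$
which is a sum of terms of the form $p_{c_i}\circ \Op^{i+1}$ and $q_{c_i}\circ\Op^i$, hence an element of $M$ with degree at most $n+1$. This closes the induction.

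I do not expect any serious obstacle here; the argument is essentially a normal-form lemma, and the only delicate point is making sure the induction is set up so that each application of the Composition Condition is legal (it applies only when $\Op$ sits immediately to the left of an element of $\Ring$, which is exactly the situation produced at each step above). Once $\Op^n\circ f\in M$ is established, closure of $M$ under composition, and hence $\Ops=M$, follow without further calculation.
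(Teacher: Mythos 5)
Your proof is correct and takes essentially the same route as the paper's: both arguments reduce the problem to normalizing expressions of the form $\Op^{n}\circ f$ by induction on the power of $\Op$, applying the Composition Condition once per step to push a ring element leftward past a single $\Op$. The only cosmetic difference is that the paper peels off the copy of $\Op$ adjacent to the ring element inside a generic word $\rr_1\circ\Op^m\circ\rr_2\circ\Op^n$, whereas you normalize $\Op^n\circ f$ first and then handle the outermost $\Op$, packaging the whole thing as a closure argument for the set $M$.
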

\begin{proof}
Let $\L\in\Ops$, then $\L$ can be written as a sum of finite products involving $\Op$ and elements of $\Ring$.  Because of the associative and distributive properties of operator multiplication, it is sufficient to ensure that any product of the form
$$
\rr_1 \circ \Op^m\circ \rr_2\circ \Op^n
$$
can be written in the claimed form where $\rr_1,\rr_2\in\Ring$.

If $m=0$ then this is clearly true since the operator identity $\rr_1\circ 1\circ \rr_2\circ \Op^n=(\rr_1\rr_2)\circ\Op^n$  holds.
Now, suppose it is known that for any $0\leq m<M$ that such an expression is equivalent to a sum of powers of $\Op$ left multiplied by elements of $\Ring$.  Then consider
\begin{eqnarray*}
\rr_1\circ\Op^M\circ \rr_2\circ\Op^n&=&\rr_1\circ\Op^{M-1}\circ (\Op\circ \rr_2)\circ\Op^n\\&=&\rr_1\circ\Op^{M-1}\circ(p_{\rr_2}\circ\Op+q_{\rr_2})\circ \Op^n\\
&=&\rr_1\circ\Op^{M-1}\circ p_{\rr_2}\circ\Op^{n+1}+\rr_1\circ \Op^{M-1}\circ q_{\rr_2}\circ\Op^n.
\end{eqnarray*}
By assumption, each of these terms is of the desired form and hence so is their sum.  Thus, it is true when $m=M$ as well and the claim follows by induction.
\end{proof}

One example to have in mind (called ``the differential operator case'') is where $\Op=\partial$ is the operator that differentiates with respect to the variable $x$ and $\Ring$ is a commutative ring of infinitely differentiable functions of $x$.  In that case, the main result of this paper is a well-known factorization of an ordinary differential operator of order $m>k$ into operators of order $m-k$ and $k$ given a linearly independent set $\{f_1,\ldots,f_k\}$ of functions in its kernel.   

However, the situation described above is much more general.  In fact, because no assumption is made about the independence of the powers of $\Op$, it is not even possible in general to talk about  ``the order'' of  the elements of $\Ops$ as they may have multiple representations as linear combinations of powers of $\Op$.  In place of the familiar notion of the order of a differential operator, we introduce the following filtration.

\begin{definition}
For $n\in\N\cup\{0\}$, let
$$
\Opsi{n}=\left\{\L\in\Ops\ :\ \L=\sum_{i=0}^n\rr_i\circ\Op^i,\ \hbox{for some}\ \rr_i\in\Ring\right\}
$$
denote the left $\Ring$-module generated by the operators $\{\Op^0,\Op^1,\ldots,\Op^n\}$.  \end{definition}

This gives $\Ops$ the structure of an ascending filtration:   
By Theorem~\ref{thm:allpoly}, every $\L\in\Ops$ is in $\Opsi{n}$ for some $n$; merely adding coefficients of like powers of $\Op$ demonstrates that $\Opsi{m}$ is closed under addition; 
the fact that $\Opsi{m}\circ\Opsi{n}\subseteq\Opsi{m+n}$ is a consequence of the Composition Condition; and since it is always possible to add an additional term with a zero coefficient, $\Opsi{n}\subseteq \Opsi{n+1}$.  

\section{An Operator with Specified Kernel}

Now we select a set of elements $\{f_1,\ldots,f_k\}\subset\Ring$.  They may be chosen arbitrarily except for the restriction that a certain matrix built out of them must be invertible:

\begin{invertcrit}
Select and fix $k\in\N$ and $f_1,\ldots,f_k\in\Ring$ with the property that the matrix $\W$ is invertible where
$$
\W=\left(\begin{matrix}
f_1&f_2&\cdots&f_k\cr
\Op (f_1)&\Op (f_2)&\cdots&\Op (f_k)\cr
\Op^2 (f_1)&\Op^2 (f_2)&\cdots&\Op^2 (f_k)\cr
\vdots&\ddots&\ddots&\vdots\cr
\Op^{k-1} (f_1)&\Op^{k-1} (f_2)&\cdots&\Op^{k-1} (f_k)
\end{matrix}\right).
$$

\end{invertcrit}

\begin{remark}
For some algebras $\Ring$ and endomorphisms $\Op$, there may be very few choices of elements $f_1,\ldots,f_k$ for which $\W$ is invertible.  Theorem~\ref{mainresult} will be of limited value in such cases.  In fact, if the operator identity
$$
0=\sum_{i=0}^m \rr_i \circ\Op^i
$$
holds for some $\rr_i\in\Ring$ with $\rr_m\not=0$ then no such set with $k>m$ satisfies the invertibility condition.  Note, however, that there is always at least one such choice since $k=1$ and $f_1=1\in\Ring$ always satisfies the condition.

\end{remark}
\begin{remark}
Clearly, the matrix $\W$ is an analogue of the matrix in the differential operator case whose determinant is the Wronskian.  Generally, its chief significance is that a non-zero Wronskian indicates that the set $\{f_1,\ldots,f_k\}$ of scalar functions is linearly independent.  However, as far as this note is concerned the significance of the matrix $\W$ is that its invertibility allows us to construct operators in $\Opsi{k-1}$ that are ``dual'' to the elements $f_i$ as shown in the following lemma.
\end{remark}

\begin{lemma}\label{lem:P_i}
The operator $$\P_i=\sum_{l=1}^{k}(\Winv)_{il}\circ\Op^{l-1}$$ satisfies $\P_i(f_j)=\delta_{ij}$ for each $1\leq i,j\leq k$.
\end{lemma}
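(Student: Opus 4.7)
The plan is to recognize that the claimed identity $\P_i(f_j)=\delta_{ij}$ is, after unwinding definitions, literally the statement that $\Winv\cdot\W=I$, so no real work beyond bookkeeping is required.

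First, I would apply $\P_i$ to $f_j$ directly using its definition. Since each coefficient $(\Winv)_{il}\in\Ring$ is being identified with the left-multiplication operator $L_{(\Winv)_{il}}$, one has
\[
\P_i(f_j)=\sum_{l=1}^{k}(\Winv)_{il}\cdot\Op^{l-1}(f_j).
\]
Next, I would observe that by the very definition of the matrix $\W$, its $(l,j)$ entry is precisely $\Op^{l-1}(f_j)$. Substituting this back gives
\[
\P_i(f_j)=\sum_{l=1}^{k}(\Winv)_{il}\cdot\W_{lj},
\]
which, according to the matrix multiplication convention recalled in the Preliminaries, is exactly the $(i,j)$ entry of $\Winv\cdot\W$. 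Since $\Winv$ is by hypothesis a two-sided inverse of $\W$, this entry is $\delta_{ij}$, completing the proof.

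The only point requiring any care is to make sure we are using the correct one of the two identities $\Winv\cdot\W=I$ and $\W\cdot\Winv=I$, since $\Ring$ is non-commutative and the matrices have entries in $\Ring$. The computation naturally produces the product $\Winv\cdot\W$ with the coefficients of $\P_i$ on the left, matching the stated convention for matrix multiplication, so this matches the needed identity. I do not anticipate any genuine obstacle; the lemma is essentially a definitional consequence of declaring the coefficients of $\P_i$ to be the rows of $\Winv$.
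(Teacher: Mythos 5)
Your proposal is correct and follows essentially the same route as the paper: apply $\P_i$ to $f_j$, identify $\Op^{l-1}(f_j)$ with the $(l,j)$ entry of $\W$, and recognize the resulting sum as the $(i,j)$ entry of $\Winv\cdot\W$, which is $\delta_{ij}$. Your extra remark about which of the two inverse identities is being used is a nice point of care in the non-commutative setting, but the argument is the same.
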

\begin{proof}
The claim follows from the assumption that $\Winv$ is the inverse of $\W$ because
$$
\P_i(f_j)=\sum_{l=1}^{k}\left((\Winv)_{il}\circ\Op^{l-1}\right)(f_j)=\sum_{l=1}^{k}(\Winv)_{il}\left(\Op^{l-1}f_j\right)=(\Winv\cdot\W)_{ij}.
$$
%By the definition of  inverse, $\Winv_i\cdot\W$ is a row vector with $0$'s in every entry except for the $i^{th}$ which is $1\in\Ring$.  The claim then follows from the correspondence between matrix multiplication and application of operators given in Theorem~\ref{thm:comap} because $\W= \Wr_{k-1}(f_1,\ldots,f_k)$.
\end{proof}

As a consequence, we can produce an operator $\hat \P\in\Opsi{k-1}$ which does anything we wish to the elements $f_1,\ldots, f_k$:
\begin{corollary}\label{hatP}
For any $\hat f_i\in\Ring$ ($1\leq i\leq k$) let $\hat \P=\sum \hat f_i\P_i$.  Then
$
\hat \P(f_i)=\hat f_i.
$
\end{corollary}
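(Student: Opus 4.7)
The plan is to verify this directly by applying $\hat\P$ to $f_j$, exploiting linearity of composition together with Lemma~\ref{lem:P_i}. Recall that each $\hat f_i \in \Ring$ is being identified with the left-multiplication endomorphism $L_{\hat f_i}$, so the expression $\hat f_i \circ \P_i$ denotes the composition of operators in $\Ops$, which is well defined since $\Ops$ is closed under composition.

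First I would fix an arbitrary $j$ with $1 \leq j \leq k$ and unpack the definition of $\hat\P$ evaluated at $f_j$:
\begin{equation*}
\hat\P(f_j) = \left(\sum_{i=1}^{k} \hat f_i \circ \P_i\right)(f_j) = \sum_{i=1}^{k} \hat f_i \bigl(\P_i(f_j)\bigr).
\end{equation*}
Next I would invoke Lemma~\ref{lem:P_i} to replace $\P_i(f_j)$ by $\delta_{ij}$, so that only the $i=j$ term survives, yielding $\hat f_j \cdot 1 = \hat f_j$, as desired.

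There is essentially no obstacle here; the corollary is a formal consequence of Lemma~\ref{lem:P_i} and the $\Ring$-linearity inherent in building $\hat\P$ as a left $\Ring$-linear combination of the $\P_i$. The only subtlety worth mentioning in the proof is the conversion from the notation $\hat f_i \P_i$ (multiplication of operators) to the ordinary product $\hat f_i \cdot \P_i(f_j)$ in $\Ring$, which is just the statement that $\hat f_i$, viewed as an operator, acts by left multiplication.
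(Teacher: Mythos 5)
Your proof is correct and is exactly the argument the paper intends: the corollary is stated without proof as an immediate consequence of Lemma~\ref{lem:P_i}, and your computation (expand $\hat\P(f_j)$ by linearity, replace $\P_i(f_j)$ by $\delta_{ij}$, keep the $i=j$ term) is precisely the omitted verification. Your remark about $\hat f_i$ acting as the left-multiplication operator $L_{\hat f_i}$ is a reasonable point of care and consistent with the paper's conventions.
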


In particular, we may build out of the operators $\P_i$ an operator in $\Opsi{k}$ having each of the elements $f_i$ ($1\leq i\leq k$) in its kernel:
\begin{definition}\label{def:K}
Given $f_i\in\Ring$ satisfying the Invertibility Condition,
let $\hat f_i=\Op^k (f_i)$ and let $\hat \P$ be the corresponding operator from Corollary~\ref{hatP}.  Finally, define $\K$ to be
$\displaystyle\K=\Op^k-\hat \P$. 
\end{definition}
\begin{theorem}\label{thm:K}
 The kernel of operator $\K$ from Definition~\ref{def:K} contains each of the elements $f_i$: $\K (f_i)=0$ for $1\leq i\leq k$.
\end{theorem}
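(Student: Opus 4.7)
The plan is to prove the statement by direct evaluation, since essentially all of the work has already been done by Corollary~\ref{hatP}. The idea is that $\hat\P$ was explicitly constructed so that $\hat\P(f_i)=\hat f_i$ for each $i$, and the specific choice $\hat f_i=\Op^k(f_i)$ in Definition~\ref{def:K} was made precisely so that $\hat\P$ agrees with $\Op^k$ on each of the $f_i$; subtracting the two operators must then annihilate every $f_i$.

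Concretely, I would fix $j$ with $1\leq j\leq k$ and evaluate $\K(f_j)$ by unfolding the definition $\K=\Op^k-\hat\P$ to obtain
$$
\K(f_j)=\Op^k(f_j)-\hat\P(f_j).
$$
The key input is Corollary~\ref{hatP}, applied to the particular choice $\hat f_i=\Op^k(f_i)$ specified in Definition~\ref{def:K}: it yields $\hat\P(f_j)=\hat f_j=\Op^k(f_j)$. Substituting this into the displayed line gives $\K(f_j)=\Op^k(f_j)-\Op^k(f_j)=0$, which is the claim.

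There is no genuine obstacle here; the theorem is a tautological unpacking of the definition of $\K$ together with Corollary~\ref{hatP}, so the write-up will be only a couple of lines. The one conceptual point worth flagging in the argument is the identification of $\hat f_i\in\Ring$ with the left-multiplication endomorphism $L_{\hat f_i}$, under which the composition $\hat f_i\circ\P_i$ acts on $f_j$ as $\hat f_i\cdot\P_i(f_j)$; then Lemma~\ref{lem:P_i} collapses $\P_i(f_j)=\delta_{ij}$ and reduces the defining sum of $\hat\P(f_j)$ to the single surviving term $\hat f_j$.
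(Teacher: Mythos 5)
Your proposal is correct and follows essentially the same route as the paper: unfold $\K=\Op^k-\hat\P$, invoke Corollary~\ref{hatP} with the choice $\hat f_i=\Op^k(f_i)$ from Definition~\ref{def:K}, and observe the two terms cancel. The extra remark about identifying $\hat f_i$ with the left-multiplication endomorphism is a fine clarification but not a departure from the paper's argument.
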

\begin{proof}
\begin{eqnarray*}
\K( f_i)&=&(\Op^k-\hat \P)(f_i)
=\Op^k (f_i )-\hat f_i\hbox{ (by Corollary~\ref{hatP})}\\
&=&\Op^k(f_i)-\Op^k(f_i)\hbox{ (by definition of $\hat f_i$)}\\
&=&0.
\end{eqnarray*}
\end{proof}

\begin{remark}
In the scalar differential operator case, $\K$ is the order $k$ differential operator whose action on an arbitrary function is the same as the Wronskian determinant  $\hbox{Wr}(f_1,\ldots,f_k,y)$ divided by the Wronskian determinant  $\hbox{Wr}(f_1,\ldots,f_k)$.  More generally, when $\Ring$ is a non-commutative ring of functions and $\Op=\partial$ differentiates those functions with respect to the variable, then this is the same as the operator produced using quasideterminants in \cite{QD}.
\end{remark}

\section{Another Spanning Set for $\Ops$}

As previously noted, every element of $\Ops$ can be written as a finite linear combination of terms of the form $\Op^i$ ($i\geq 0$) left multiplied by elements of $\Ring$.  This section will introduce another set spanning $\Ops$ as a left $\Ring$-module which depends on the choice of elements $f_1,\ldots,f_k\in\Ring$ satisfying the Invertibility Condition.

\begin{definition}
For $i\in\N\cup\{0\}$ define $\hatOp_i$ by
$$
\hatOp_i=\left\{\begin{matrix} \P_{i+1}&\hbox{if $0\leq i\leq k-1$}\\
\\
\Op^{i-k}\circ \K&\hbox{if $i\geq k$}\end{matrix}\right.
$$
where $\P_i$ and $\K$ are the operators from Lemma~\ref{lem:P_i} and Definition~\ref{def:K}.  
%Note that for $i\geq k$ it is possible to write $\hatOp_i$ in the form
%\begin{equation}
%\hatOp_i=\Op^i+\sum_{i=0}^{r-1}\rr_i\Op^i\in\Opsi{i}.
%\label{eqn:hatOpform}
%\end{equation}
\end{definition}

\begin{lemma}\label{lem:coefs}
For any $\L\in\Opsi{m}$ with $m\geq k-1$, there exist elements $\hat \rr_i\in\Ring$ such that
$$
\L=\sum_{i=0}^m\hat \rr_i \circ\hatOp_i.
$$
\end{lemma}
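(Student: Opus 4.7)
The plan is to proceed by induction on $m$, with base case $m = k-1$.

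For the base case, I would first invert the defining identity of $\P_i$. Since $\P_i = \sum_{l=1}^k (\Winv)_{il}\circ\Op^{l-1}$, and composition of left multiplications corresponds to matrix multiplication of their coefficients, the identity $\W\cdot\Winv = I$ yields
$$
\Op^{l-1}=\sum_{i=1}^k \W_{li}\circ\P_i = \sum_{i=1}^{k}\Op^{l-1}(f_i)\circ\hatOp_{i-1}
$$
for $1\leq l\leq k$. Substituting this into $\L=\sum_{i=0}^{k-1}\rr_i\circ\Op^i$ and collecting terms produces the desired expansion in $\hatOp_0,\ldots,\hatOp_{k-1}$.

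For the inductive step with $m\geq k$, assume the claim for every index strictly less than $m$, and write $\L=\sum_{i=0}^m \rr_i\circ\Op^i\in\Opsi{m}$. The identity $\hatOp_m=\Op^{m-k}\circ\K=\Op^m-\Op^{m-k}\circ\hat\P$ rearranges to $\Op^m=\hatOp_m+\Op^{m-k}\circ\hat\P$. Because $\hat\P\in\Opsi{k-1}$, the filtration property $\Opsi{m-k}\circ\Opsi{k-1}\subseteq\Opsi{m-1}$ from the Composition Condition ensures $\Op^{m-k}\circ\hat\P\in\Opsi{m-1}$. Consequently $\L-\rr_m\circ\hatOp_m\in\Opsi{m-1}$, and since $m-1\geq k-1$ the inductive hypothesis supplies coefficients $\hat\rr_0,\ldots,\hat\rr_{m-1}\in\Ring$ expressing this remainder in terms of $\hatOp_0,\ldots,\hatOp_{m-1}$. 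Setting $\hat\rr_m=\rr_m$ completes the expansion.

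There is no serious obstacle: the definitions of $\P_i$ and $\K$ are engineered precisely so that (i) $\Op^0,\ldots,\Op^{k-1}$ and $\P_1,\ldots,\P_k$ are related by an invertible left-$\Ring$-linear change of basis, and (ii) $\Op^m$ agrees with $\hatOp_m$ modulo a term of strictly lower filtration. The only point that warrants care is treating matrix products consistently as compositions of left multiplications; this is legitimate because multiplication in $\Ops$ is associative and left multiplication embeds $\Ring$ into $\End(\Ring)$ as a ring.
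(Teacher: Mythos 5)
Your proof is correct and follows essentially the same route as the paper's: induction on $m$ with base case $m=k-1$ resolved by the invertibility of $\W$ (your operator identity $\Op^{l-1}=\sum_{i}\W_{li}\circ\P_i$ is just the paper's insertion of $\W\cdot\Winv$ into the evaluation of $\L$, read instead as a change of spanning set, and it yields the same coefficients $\hat\rr_{j-1}=\sum_i \rr_i\,\Op^i(f_j)$), followed by an inductive step that subtracts $\rr_m\circ\hatOp_m$ and observes that the remainder lies in $\Opsi{m-1}$. The only cosmetic difference is that the paper writes out the lower-order coefficients of $\Op^{m-k}\circ\hat\P$ explicitly where you invoke the filtration property $\Opsi{m-k}\circ\Opsi{k-1}\subseteq\Opsi{m-1}$ directly; both are justified by the Composition Condition.
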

\begin{proof}
Let $\L=\sum_{i=0}^m\rr_i\circ\Op^i$.

First, consider the case that $m=k-1$.
Define $\hat \rr_{i-1}$ for $1\leq i\leq k$ to be the matrix product of $(\rr_0\ \cdots\ \rr_{k-1})$ with the $i^{th}$ column of $\W$ and $\hat \rr_i=0$ for $i\geq k$.
The claim then follows because for any $f\in\Ring$ \begin{eqnarray*}
\L(f)&=&(\rr_0\ \cdots\ \rr_{k-1})\cdot\left(\begin{matrix}f\\\Op f\\\Op^2 f\\\vdots\\\Op^{k-1} f\end{matrix}\right)
=(\rr_0\ \cdots\ \rr_{k-1})\cdot \W\cdot \Winv\cdot \left(\begin{matrix}f\\\Op f\\\Op^2 f\\\vdots\\\Op^{k-1} f\end{matrix}\right)\\
&=&(\rr_0\ \cdots\ \rr_{k-1})\cdot \W\cdot \left(\begin{matrix}\hatOp_0 f\\\hatOp_1 f\\\hatOp_2 f\\\vdots\\\hatOp_{k-1} f\end{matrix}\right)
=(\hat \rr_0\ \cdots\ \hat \rr_{k-1})\cdot \left(\begin{matrix}\hatOp_0 f\\\hatOp_1 f\\\hatOp_2 f\\\vdots\\\hatOp_{k-1}f \end{matrix}\right).
\end{eqnarray*}

Now, we proceed by induction on $m$ by assuming that the claim is true if $\L\in\Opsi{m-1}$ for some $m\geq k$.

%Let $\L_-=\L-\rr_m\circ \hatOp_m$.  Writing $\L$ in the form assumed at the beginning of the proof and 
%$$
%\hatOp_m=\Op^{m-k}\circ(\Op^k-\hat P)=\Op^m-\Op^{m-k}\circ \hat P
%$$
%we see that 
%the $\Op^m$ terms on the operators $\L$ and $\rr_m\circ\hatOp_m$ in the definition of $\L_-$ cancel and we are left with a representation for $\L_-$ involving powers of $\Op$ of degree at most $m-1$.  

Define $c_i\in\Ring$ ($0\leq i\leq m-1$) by 
$$
\hatOp_m=\Op^{m-k}\circ(\Op^k-\hat P)=\Op^m-\Op^{m-k}\circ \hat P=\Op^m+\sum_{i=0}^{m-1}c_i\Op^i
$$
and let $\L_-=\L-\rr_m\circ \hatOp_m$.  Then, because the coefficients of the degree $m$ term cancel, 
$$
\L_-=\sum_{i=0}^{m-1}(\rr_i-c_i)\circ \Op^i.
$$
Hence, $L_-\in\Opsi{m-1}$.  By the induction hypothesis, $\L_-$ can be written as a linear combination of the terms $\hatOp_i$ with $0\leq i\leq m-1$ having coefficients $\hat \rr_i\in\Ring$.  But, by definition, $\L$ is obtained by simply adding $\rr_m\circ\hatOp_m$ to this.
\end{proof}

If one attempted to turn the preceding inductive proof of the existence of the coefficients $\hat \rr_i$ into an algorithm for obtaining them, it would involve finding the coefficients with indices $i\geq k$ before computing the lower ones.  However, once we know that they exist, the first $k$ of them can be determined more simply by applying the desired operator to the chosen elements $f_i$:
\begin{theorem}\label{thm:key}
If
$\displaystyle
\L=\sum_{i=0}^m\hat \rr_i\circ\hatOp_i,
$
then
$
\hat \rr_{i-1}=\L(f_{i})$ for $1\leq i\leq k$.
\end{theorem}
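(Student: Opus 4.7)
The plan is to simply apply $\L$ to each $f_i$ and observe that the definition of the spanning set $\hatOp_i$ has been engineered to make all but one term vanish. Specifically, for $j \geq k$ the operator $\hatOp_j = \Op^{j-k}\circ\K$ kills every $f_i$ (since $\K(f_i)=0$ by Theorem~\ref{thm:K} and $\Op^{j-k}$ sends $0$ to $0$), while for $0 \leq j \leq k-1$ the operator $\hatOp_j = \P_{j+1}$ acts on $f_i$ as a Kronecker delta by Lemma~\ref{lem:P_i}.

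Concretely, I would compute
\begin{equation*}
\L(f_i) \;=\; \sum_{j=0}^m \hat\rr_j\circ\hatOp_j(f_i) \;=\; \sum_{j=0}^{k-1} \hat\rr_j\cdot \P_{j+1}(f_i) \;+\; \sum_{j=k}^m \hat\rr_j\cdot \Op^{j-k}(\K(f_i)),
\end{equation*}
then invoke Theorem~\ref{thm:K} to kill the second sum and Lemma~\ref{lem:P_i} to reduce the first sum to the single term $\hat\rr_{i-1}\cdot 1 = \hat\rr_{i-1}$. The only small bookkeeping point is that the identification $\Ring\hookrightarrow\End(\Ring)$ turns $\hat\rr_j\circ\hatOp_j(f_i)$ into the product $\hat\rr_j\cdot\hatOp_j(f_i)$ in $\Ring$, so that the $\delta_{j+1,i}$ from Lemma~\ref{lem:P_i} really does collapse the sum multiplicatively.

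There is no serious obstacle here; the work was done in setting up $\P_i$ and $\K$ so that the spanning set $\{\hatOp_i\}$ is ``dual'' to $\{f_1,\ldots,f_k\}$ in precisely this sense. The proof is essentially a one-line calculation.
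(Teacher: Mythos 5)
Your proposal is correct and follows the paper's own argument exactly: split the sum at $j=k$, use Lemma~\ref{lem:P_i} to collapse the first block to $\hat\rr_{i-1}$, and use $\K(f_i)=0$ from Theorem~\ref{thm:K} to annihilate the rest. The remark about the identification of $\hat\rr_j\circ\hatOp_j(f_i)$ with the ring product $\hat\rr_j\cdot\hatOp_j(f_i)$ is a nice touch of care that the paper leaves implicit.
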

\begin{proof}  Let $1\leq i\leq k$.
\begin{eqnarray*}
\L(f_{i})&=&\sum_{j=0}^m \hat \rr_j \circ\hatOp_j(f_i)\\
&=& \sum_{j=0}^{k-1}\hat \rr_j\circ\hatOp_j(f_i)+\sum_{j=k}^{m}\hat \rr_j\circ\hatOp_j(f_i)\\
&=& \sum_{j=0}^{k-1}\hat \rr_j(\hat\P_{j+1}(f_i))+\sum_{j=k}^{m}\hat \rr_j(\Op^{j-k}\circ K(f_i))\\
&=&\hat \rr_{i-1}+\sum_{j=k}^{m}\hat \rr_j(\Op^{j-k}\circ K(f_i)) \hbox{ (by Lemma~\ref{lem:P_i})}\\
&=&\hat \rr_{i-1}+0=\hat \rr_{i-1}\hbox{ (because $f_i\in\ker K$)}.
\end{eqnarray*}
\end{proof}

Even before we prove the factorization theorem in the next section, Theorem~\ref{thm:key} rules out the possibility of a non-zero element of $\Opsi{k-1}$ having each $f_i$ in its kernel:

\begin{corollary}If $\L\in\Opsi{k-1}$ and $L(f_i)=0$ for $1\leq i\leq k$ then $\L=0$ is the zero operator.
\end{corollary}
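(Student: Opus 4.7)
The plan is to combine Lemma~\ref{lem:coefs} with Theorem~\ref{thm:key} directly. Since $\L \in \Opsi{k-1}$, applying Lemma~\ref{lem:coefs} with $m = k-1$ produces coefficients $\hat \rr_0, \ldots, \hat \rr_{k-1} \in \Ring$ such that
$$
\L = \sum_{i=0}^{k-1} \hat \rr_i \circ \hatOp_i.
$$
(Note that $m = k-1$ satisfies the hypothesis $m \geq k-1$ of that lemma, so we may use it here.)

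Next, I would invoke Theorem~\ref{thm:key} to identify each of these coefficients with the value of $\L$ on the corresponding element $f_i$: namely, $\hat \rr_{i-1} = \L(f_i)$ for $1 \leq i \leq k$. By hypothesis, $\L(f_i) = 0$ for each such $i$, so every coefficient $\hat \rr_{i-1}$ vanishes. Substituting back gives $\L = \sum_{i=0}^{k-1} 0 \circ \hatOp_i = 0$, which is exactly the conclusion.

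There is no real obstacle here; the corollary is essentially a repackaging of the two preceding results. The only subtle point to check is that Lemma~\ref{lem:coefs} applies at the boundary case $m = k-1$, which it does since the lemma was explicitly stated for $m \geq k-1$, with the base case of its induction being precisely $m = k-1$. Once that is noted, Theorem~\ref{thm:key} immediately finishes the argument.
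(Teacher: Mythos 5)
Your proposal is correct and follows exactly the paper's own argument: apply Lemma~\ref{lem:coefs} with $m=k-1$ to expand $\L$ in the operators $\hatOp_0,\ldots,\hatOp_{k-1}$, then use Theorem~\ref{thm:key} to identify each coefficient with $\L(f_i)=0$. Your extra note that the boundary case $m=k-1$ is covered by the lemma's hypothesis is a nice touch but changes nothing substantive.
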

\begin{proof}
By Lemma~\ref{lem:coefs},  if $\L\in\Opsi{k-1}$  then $\L$ can be written as a linear combination of the operators $\hatOp_i$ for $0\leq i\leq k-1$.  But, by  Theorem~\ref{thm:key}, the coefficients are all obtained by applying $\L$ to the elements $f_i$.  So, if those elements are in the kernel of $\L$ then it is equal to a linear combination with all zero coefficients and hence is zero itself.
\end{proof}

\section{Factorization}

The main result is that an operator $\L\in\Ops$ has each of the elements $f_i$  in its kernel precisely when it has a right factor of the operator $\K$ from Definition~\ref{def:K}.

\begin{theorem}\label{mainresult}
For $\L\in\Ops$, $\L(f_i)=0$ for $1\leq i\leq k$ if and only if there exists $\Q\in\Ops$ such that $\L=\Q\circ \K$.
\end{theorem}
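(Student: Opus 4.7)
My plan is to prove the two directions separately, with the reverse direction being essentially immediate and the forward direction relying on the structural decomposition developed in Section 4.

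For the ``if'' direction, suppose $\L = \Q \circ \K$ for some $\Q \in \Ops$. Then for each $1 \le i \le k$, I compute $\L(f_i) = \Q(\K(f_i)) = \Q(0) = 0$, using Theorem~\ref{thm:K} to conclude $\K(f_i) = 0$ and the fact that any operator in $\Ops$ sends $0$ to $0$ (since it is a sum of compositions of $\Field$-linear maps).

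For the ``only if'' direction, the key is to exploit the alternate spanning set $\{\hatOp_i\}$. Suppose $\L(f_i) = 0$ for $1 \le i \le k$. By Theorem~\ref{thm:allpoly}, $\L \in \Opsi{m}$ for some $m$; I may assume $m \ge k-1$ by inflating $m$ with zero coefficients if necessary. Then Lemma~\ref{lem:coefs} provides an expression $\L = \sum_{i=0}^m \hat \rr_i \circ \hatOp_i$ with $\hat \rr_i \in \Ring$. Now Theorem~\ref{thm:key} tells me that $\hat \rr_{i-1} = \L(f_i) = 0$ for $1 \le i \le k$, so the first $k$ terms in the sum vanish. What remains is
\[
\L = \sum_{i=k}^m \hat \rr_i \circ \hatOp_i = \sum_{i=k}^m \hat \rr_i \circ \Op^{i-k} \circ \K = \left(\sum_{i=k}^m \hat \rr_i \circ \Op^{i-k}\right) \circ \K,
\]
where I have used the definition $\hatOp_i = \Op^{i-k} \circ \K$ for $i \ge k$ and the associativity of composition to pull the factor of $\K$ to the right. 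Setting $\Q = \sum_{i=k}^m \hat \rr_i \circ \Op^{i-k} \in \Ops$ gives the required factorization.

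The main obstacle, in principle, is ensuring that the alternate-basis expansion given by Lemma~\ref{lem:coefs} genuinely places $\K$ as a right factor in each of the higher terms $\hatOp_i$ for $i \ge k$; but this has been arranged by the very definition of $\hatOp_i$, so once one invokes Lemma~\ref{lem:coefs} and Theorem~\ref{thm:key} together the factorization essentially falls out. The only minor bookkeeping point is to handle the case $m < k-1$ (which reduces, via the corollary immediately preceding this theorem, to $\L = 0$, and then $\Q = 0$ works trivially).
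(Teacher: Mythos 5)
Your proposal is correct and follows essentially the same route as the paper's own proof: the reverse direction via Theorem~\ref{thm:K}, and the forward direction by combining Lemma~\ref{lem:coefs} with Theorem~\ref{thm:key} to kill the first $k$ coefficients and then factoring $\K$ out of the remaining terms. The only cosmetic difference is your closing remark about $m<k-1$, which is already subsumed by your earlier observation that $m$ can be inflated using the ascending filtration.
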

\begin{proof}  
Clearly, if $\L=\Q\circ\K$ for some $\Q\in\Ops$ then $\L(f_i)=0$ for each $1\leq i\leq k$ because $\K(f_i)=0$ and multiplication in $\Ops$ is defined to coincide with operator composition.

Now, suppose $\L(f_i)=0$ for $1\leq i\leq k$ and 
let $m\in\N$ be large enough that $m>k$ and $\L\in\Opsi{m}$.  Then by Lemma~\ref{lem:coefs} there exist coefficients $\hat \rr_i$ such that
$\displaystyle
\L=\sum_{i=0}^m\hat \rr_i\circ\hatOp_i$.
Using Theorem~\ref{thm:key} to compute the first $k$ coefficients we determine that $\hat \rr_i=\L(f_{i+1})=0$ for $0\leq i\leq k-1$(by the assumption that $f_i\in\ker \L$).  Eliminating these terms with known zero coefficients from the sum we get
$$
\L=\sum_{i=0}^m\hat \rr_i\circ\hatOp_i=\sum_{i=k}^m\hat \rr_i\circ\hatOp_i.
$$
But since $\hatOp_i=\Op^{i-k}\circ K$ for $i\geq k$ this can be written as
$$
\L=\sum_{i=k}^m\hat \rr_i\circ\Op^{i-k}\circ K=\left(\sum_{i=0}^{m-k}\hat \rr_{i+k}\circ\Op^{i}\right)\circ K.
$$
Then the claim is true with  $\Q$ being defined as the left factor in the last product.
\end{proof}

\section{Examples}\label{sec:examples}

\subsection{Differential Operators with rational quaternionic coefficients}

Let $\Field=\mathcal{Q}_8$ be the quaternions.  An element of $\Field$ is of the form $a+b\qi + c\qj+ d \qk$ where $a$, $b$, $c$ and $d$ are real numbers and $\qi$, $\qj$ and $\qk$ are non-commuting elements satisfying $\qi^2=\qj^2=\qk^2=\qi\qj\qk=-1$. Let $\Ring=\Field(x)$ be the ring of rational functions of the real variable $x$ with quaternionic coefficients.  

The differential operator $\Op=\frac{d}{dx}$ satisfies the Composition Condition because $\Op\circ f(x)=f(x)\circ\Op+f'(x)$ is a consequence of the product rule.

With $k=2$, $f_1=x\qk$ and $f_2=x^3\qi$ we get that the Wronskian matrix $\Phi$ has inverse $\Phi^{-1}$ where
$$
\Phi=\left(\begin{matrix}
x\qk&x^3\qi\strut\cr
\qk&3x^2\qi\strut\end{matrix}\right)
\qquad
\hbox{and}
\qquad
\Phi^{-1}=\left(\begin{matrix}
\frac{-3}{2x}\qk&\frac{1}{2}\qk\strut\cr
\frac{1}{2x^3}\qi&\frac{-1}{2x^2}\qi\strut\end{matrix}\right).
$$

Then, Lemma~\ref{lem:P_i} tells us that the operators
$$
P_1=\frac{-3}{2x}\qk+\frac{1}{2}\qk\circ\Op
\qquad
\hbox{and}
\qquad
P_2=\frac{1}{2x^3}\qi+\frac{-1}{2x^2}\qi\circ\Op
$$
(whose coefficients come from the first and second rows of $\Phi^{-1}$ respectively) satisfy $P_i(f_j)=\delta_{ij}$.  This can easily be verified by computation.

Moreover, letting $\K=\Op^2-\hat f_1 \circ P_1-\hat f_2 \circ P_2$ where $\hat f_i=\Op^2(f_i)$ we get
$$
\K=\Op^2-\frac{3}{x}\circ\Op+\frac{3}{x^2}.
$$
Indeed, $\K(f_1)=\K(f_2)=0$.  
Another element of $\Ops$ having both $f_1$ and $f_2$ in its kernel is $\L=x^3\qj\Op^3+(x^2\qi-3x^3\qj)\circ\Op^2-3(x\qi-2x\qj)\circ\Op+3(\qi-2\qj)$. Using
Theorem~\ref{mainresult} we know immediately that $\L$ has a right factor of $K$.

\subsection{Difference Operators}

Let $\Op$ be the difference operator acting on the ring $\Ring$ of real rational functions of the discrete variable $n$ by the formula $\Op(g(n))=g(n+1)+ c g(n)$ where $c$ is some constant.  Then $\Op$ satisfies the Composition Condition since for any function $f(n)$
\begin{eqnarray*}
(\Op\circ f)g(n)&=&\Op(f(n)g(n))=f(n+1)g(n+1)+cf(n)g(n)\\
&=&
[f(n+1)\Op+cf(n)-cf(n+1)](g(n)).
\end{eqnarray*}
Then for any functions $f_1(n),\ldots,f_k(n)$ for which the corresponding matrix $\Phi$ is invertible, Theorem~\ref{thm:K} provides 
a method for producing an operator $K$ that is polynomial in $\Op$ with coefficients that are functions of $n$ having each of those functions in its kernel, and Theorem~\ref{mainresult} ensures that any such operator with those functions in its kernel has a right factor of $K$.

For example, if $f_1=n$ and $f_2=n^2$ then the Casoratian matrix $\Phi$ and its inverse would be
$$
\Phi=\left(\begin{matrix}
n&n^2\strut\cr
(c+1)n+1&\strut(c+1)n^2+2n+1
\end{matrix}\right)
\ 
\hbox{and}
\ 
\Phi^{-1}=\left(\begin{matrix}
 \frac{(c+1) n^2+2 n+1}{n^2+n} & -\frac{n}{n+1}\strut \cr
 -\frac{c n+n+1}{n^2+n} & \frac{1}{n+1} \strut
\end{matrix}\right).
$$
The rows of $\Phi^{-1}$ give us coefficients for operators $P_i$ in $\Ops$ with the property that $P_i(f_j)=\delta_{ij}$ and out of them we can build the operator 
$$
\K=\Op^2-\frac{2 (c n+c+n+2)}{n+1}\circ\Op+\frac{\left(c^2+4 c+3\right) n+(c+1)^2 n^2+2}{n (n+1)}
$$
from Definition~\ref{def:K} which has $n$ and $n^2$ in its kernel.  In fact, Theorem~\ref{mainresult} assures us that a difference operator $\L\in\Ops$ has these two functions in its kernel if and only if $\L=\Q\circ\K$ for some $\Q\in\Ops$.

\subsection{Fifth Roots of Unity}

For a very different example, consider the algebra $\Ring=\mathbb{Z}[\rho]$  over the ring $\Field=\mathbb{Z}$ of integers, where $\rho^5=1\not=\rho$.  Any element of $x\in\Ring$ then has a unique representation of the form $x=a+b\rho+c\rho^2+d\rho^3+e\rho^4$ where $a,b,c,d,e\in\mathbb{Z}$.

The automorphism that permutes the fifth roots of unity:
\begin{eqnarray*}
\Op:\Ring&\to&\Ring\\
a+b\rho+c\rho^2+d\rho^3+e\rho^4&\mapsto&
a+d\rho+b\rho^2+e\rho^3+c\rho^4
\end{eqnarray*}
distributes linearly over $\Field$ linear combinations.  However, unlike the previous examples this endomorphism is unipotent since $\Op^4=\Op^0$ is the identity map.  Consequently, $\Ops=\Opsi3$.

Nevertheless, the results of this note apply to this situation 
since the automorphism satisfies the Composition Condition.  In fact, since it is also a group homomorphism, for any $x\in\Ring$ we have  simply
$
\Op\circ x=\Op(x)\circ\Op$.

If $k=1$ and $f_1=\rho^2$ then Definition~\ref{def:K} allows us to produce an operator $\K\in\Ops$ having $f_1$ in its kernel.  In particular, since $\Phi^{-1}=\rho^3=P_1$ we have simply that
$$
\K=\Op-[\Op(f_1)]\circ P_1=\Op-\rho^4\rho^3=\Op-\rho^2.
$$
Indeed, $\K(\rho^2)=\rho^4-\rho^4=0$.  More interestingly, we know that an operator in $\Ops$ %of the form
%$$
%\L=\rr_3\circ\Op^3+\rr_2\circ\Op^2+\rr_1\circ\Op+\rr_0
%$$
satisfies $\L(\rho^2)=0$ if and only if $\L=\Q\circ\K$ for some $\Q\in\Ops$.  In particular, even though it is not immediately obvious that $\L=\rho\Op^3-1$ is a multiple of $\Op-\rho^2$, since $\L(\rho^2)=0$ we know from Theorem~\ref{mainresult} that it is.  (Indeed,  one can check that $\L=(\rho\circ \Op^2+\rho^4\circ\Op+\rho^3)\circ K$.)

\section{Concluding Remarks}

\begin{remark}
Another way to state the main result (Theorem~\ref{mainresult}) would be to say that given the Composition and Invertibility Conditions, the set of elements in $\Ops$ having the elements $f_i$ ($1\leq i\leq k$) in their kernel is precisely the left ideal generated by $K$.
\end{remark}
\begin{remark}
As already noted, in the case that $\Op=\frac{d}{dx}$ and $\Ring$ is a (non-commutative) ring of differentiable functions of $x$, the operator $K$ having a specified kernel produced in Theorem~\ref{thm:K} is the same as the operator with this property produced using quasideterminants in \cite{QD}.   Moreover, that paper provides a complete factorization of the operators produced using that procedure.  However, it does not follow from those results that \textit{any} differential operator with coefficients in $\Ring$ and having those functions in the kernel must have that operator $K$ as a right factor.  Theorem~\ref{mainresult} above shows that this is the case, even if that operator has a non-invertible leading coefficient and hence could not be one of the operators produced by the quasideterminant procedure.
\end{remark}

\begin{remark}
For other results about the factorization of differential operators given functions in their kernels, see \cite{MatrixFactNote} which factors matrix coefficient ordinary differential operators given vector functions in their kernel and \cite{FactorPDOs} which factors constant coefficient scalar partial differential operators whose kernels contain certain continuous families of functions.
\end{remark}

\begin{remark}An application of these results is the ability to produce intertwining relationships for operators:
\begin{corollary}
Suppose $\{f_1,\ldots,f_k\}\subset\Ring$ and $\K\in\Ops$ are as in Theorem~\ref{mainresult} and that $R\in\Ops$ is an operator with the property that $R(f_i)\in\ker\K$ for $1\leq i\leq k$.  
Then $\K\circ R=\Q\circ K$ for some operator $\Q\in\Ops$.  
\end{corollary}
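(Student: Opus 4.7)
The plan is to apply Theorem~\ref{mainresult} directly to the composite operator $\K\circ R$. Since $\Ops$ is closed under composition, $\K\circ R$ is an element of $\Ops$, so the main theorem will yield the desired factorization $\K\circ R=\Q\circ\K$ as soon as I verify that each $f_i$ lies in $\ker(\K\circ R)$.

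The verification of the kernel condition should be immediate. Because multiplication in $\Ops$ coincides with operator composition on $\Ring$, we have $(\K\circ R)(f_i)=\K(R(f_i))$. The hypothesis on $R$ is precisely that $R(f_i)\in\ker\K$, so this expression evaluates to $0$ for every $1\leq i\leq k$. Thus each $f_i$ is in $\ker(\K\circ R)$, and Theorem~\ref{mainresult} supplies an operator $\Q\in\Ops$ with $\K\circ R=\Q\circ\K$, which is the claim.

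There is no genuine obstacle: the corollary is essentially a repackaging of Theorem~\ref{mainresult}. The only point worth emphasizing is that the hypothesis ``$R(f_i)\in\ker\K$'' is exactly the condition needed to conclude $f_i\in\ker(\K\circ R)$, so no additional information about $R$, $\K$, or the filtration on $\Ops$ enters the argument.
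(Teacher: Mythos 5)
Your proof is correct and follows essentially the same route as the paper: compute $(\K\circ R)(f_i)=\K(R(f_i))=0$ from the hypothesis, then apply Theorem~\ref{mainresult} to the composite $\K\circ R$ to obtain the factorization. No gaps.
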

\begin{proof}
Since $\K\circ R(f_i)=\L(R(f_i))$, if $R(f_i)$ is in the kernel of $\K$ for each $1\leq i\leq k$ then $\K\circ R(f_i)=0$.  According to Theorem~\ref{mainresult}, there exists an operator $\Q\in\Ops$ such that $\K\circ R=\Q\circ \K$.
\end{proof}
In the differential operator case, such intertwining relationships are useful for constructing Darboux transformations in which a new operator $\Q$ having some desired property is produced from a known operator $R$ having that property \cite{Matveev}.
\end{remark}

\begin{remark} 
In some cases, the operator $K$ defined in Definition~\ref{def:K} turns out to be the zero operator.  Still, Theorem~\ref{mainresult} applies and one concludes that the zero operator is the only element of $\Ops$ having each $f_i$ in its kernel.
\end{remark}

\begin{remark}
The endomorphisms $\Op$ being considered in this paper are closely related to ``skew derivations'' \cite{skew}.   After reading an early draft of this note, Oleg Smirnov noted that an endomorphism satisfies the Composition Condition if and only if it can be written as the sum of a skew derivation and a left multiplication operator.  Hence, any skew derivations could be used to provide additional examples of endomorphisms for which the main result applies.  Note, however, that only the first of the three examples given in Section~\ref{sec:examples} is a skew derivation.
\end{remark}
 \begin{acknowledgment}
 The author wishes to thank Maarten Bergvelt, Michael Gekhtman, Tom Kunkle,
 Chunxia Li and Oleg Smirnov for advice, assistance and encouragement.
 \end{acknowledgment}

\end{document}